\newcommand{\CP}{\mathds{C}\mathrm{P}}
\newcommand{\CH}{\mathds{C}\mathrm{H}}
\newcommand{\po}{\forall}                     
\newcommand{\C}{\mathds{C}}             
\newcommand{\de}{\partial}
\newcommand{\Ric}{\mathrm{Ric}}
\newtheorem{theor}{Theorem}
\newtheorem{lem}[theor]{Lemma}
\newtheorem{cor}[theor]{Corollary}
\newtheorem{remar}[theor]{Remark}
\DeclareMathOperator{\Kernel}{K}
\DeclareMathOperator{\N}{N}
\begin{document}
\title{Canonical metrics on Cartan--Hartogs domains}
\author[M. Zedda]{Michela Zedda}
\address{Dipartimento di Matematica e Informatica, Universit\`{a} di Cagliari,
Via Ospedale 72, 09124 Cagliari, Italy}
\email{michela.zedda@gmail.com  }
\thanks{
The author was  supported by  RAS
through a grant financed with the ``Sardinia PO FSE 2007-2013'' funds and 
provided according to the L.R. $7/2007$.}
\date{}
\subjclass[2000]{53C55; 32Q15; 32T15.} 
\keywords{K\"ahler metrics; Extremal metrics; Engli\v{s} expansion; Cartan--Hartogs domains}
\maketitle

\begin{abstract}
In this paper we address two problems concerning a family of domains $M_{\Omega}(\mu) \subset \C^n$, called \emph{Cartan--Hartogs} domains, endowed with a natural K\"ahler metric $g(\mu)$. The first one is determining when the metric $g(\mu)$ is {\em extremal} (in the sense of Calabi), while the second one studies when the coefficient $a_2$ in the Engli\v{s} expansion of Rawnsley $\varepsilon$-function associated to $g(\mu)$ is constant.
\end{abstract}

\section{Introduction}
This paper deals with the $1$-parameter family of domains $M_{\Omega}(\mu)\subset\C^n$, called Cartan--Hartogs domains, endowed with a natural K\"ahler metric $g(\mu)$ (see next section for details). 
Cartan--Hartogs domains have been considered by several authors (see \cite{roos} and references therein) and are interesting from many points of view. With the exception of  the complex hyperbolic space which is obviously homogeneous, each Cartan--Hartogs domain $(M_{\Omega}(\mu),g(\mu))$ is a noncompact, nonhomogeneous, complete K\"ahler manifold. Further, in \cite{roos} (see also  \cite{compl}) it is shown that for a particular value $\mu_0$ of $\mu$, $g(\mu_0)$ is a K\"ahler--Einstein metric. 
In the joint work with A. Loi \cite{articwall} the author of the present paper shows that 
$\left(M_{\Omega}(\mu_0), g(\mu_0)\right)$, represent the first example of complete, nonhomogeneous K\"ahler--Einstein metric which admits a holomorphic and isometric immersion $f$ into the infinite dimensional complex projective space $\CP^\infty$, i.e. $f^*g_{FS}=g(\mu)$, where $g_{FS}$ denotes the Fubini--Study metric on $\CP^\infty$. 

In this paper we study two problems. In the first one we investigate for what choices of $\mu$ and of the basis domain $\Omega$, the metric $g(\mu)$ on the Cartan--Hartogs domain $M_{\Omega}(\mu)$ is {\em extremal} in the sense of E. Calabi (cfr. \cite{Calabi82}). Extremal metrics on compact K\"ahler manifolds have been largely studied by many mathematicians, in particular concerning the problem of existence and uniqueness in a given K\"ahler class (see \cite{ChenTian05},  \cite{ChenTian08}) and the relationship between the existence of extremal metrics and the stability of the corresponding polarized manifold (see e.g. \cite{Donaldson02}, \cite{Donaldson09}, \cite{Mabuchi04}, \cite{Szek07}, \cite{Tian97}, \cite{Tian02}). In \cite{BurnsBart} and \cite{Levin}  examples of manifolds which admit no extremal metrics are given.  In the noncompact case the existence and uniqueness of such metrics are far from being understood and many question are still open. In \cite{Chang} it has been shown the existence of a nontrivial (namely with nonconstant scalar curvature) extremal and complete K\"ahler metric in a complex one-dimensional manifold.
In \cite{canonical} it is shown that the only extremal metric on a strongly pseudoconvex Hartogs domain is the hyperbolic metric. The following theorem, which is the first main result of this paper, generalizes this result for Cartan--Hartogs domains:
\begin{theor}\label{extremalCH}
The metric $g(\mu)$ on a Cartan--Hartogs domain $(M_\Omega(\mu),g(\mu))$ is extremal if and only if is K\"ahler--Einstein.
\end{theor}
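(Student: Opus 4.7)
The plan is to exploit the large isometry group of a Cartan--Hartogs domain in order to reduce extremality to a single-variable problem, analogously to the treatment of strongly pseudoconvex Hartogs domains in \cite{canonical}. The unitary action on the Hartogs fibre together with the transitive action of $\aut(\Omega)$ on $\Omega$ lifts to an isometric action on $(M_{\Omega}(\mu),g(\mu))$, whose orbits coincide with the level sets of a single real quantity $t$ built from the defining data (essentially $t=\|w\|^{2}/N_\Omega(z,\bar z)^{\mu}\in[0,1)$). Consequently every smooth invariant function on $M_{\Omega}(\mu)$, in particular the scalar curvature $S$, is a smooth function of $t$ alone, so $S=f(t)$.

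I would then translate extremality into an ODE for $f$. Recall that $g(\mu)$ is extremal iff the $(1,0)$-gradient $\xi=g^{i\bar\jmath}(\partial_{\bar\jmath}S)\,\partial_{i}$ is a holomorphic vector field, i.e.\ $\partial_{\bar k}\xi^{i}=0$. Using the explicit \K{} potential of $g(\mu)$ (of the form $-\log(N_\Omega(z,\bar z)^{\mu}-\|w\|^{2})$) I would invert the metric and evaluate $\bar\partial\xi$ componentwise, exploiting the fact that $S=f(t)$. The constraint coming from the fibre directions, where the $U$-symmetry is cleanest, should already force $f'(t)\equiv 0$, so that extremality forces the scalar curvature to be constant.

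It then remains to show that, within the family $(M_{\Omega}(\mu),g(\mu))$, constant scalar curvature singles out the \K--Einstein parameter $\mu_{0}$. The Ricci form of $g(\mu)$ is also $\aut(\Omega)\times U$-invariant, so the same reduction writes $\Ric(g(\mu))$ in terms of $\omega(\mu)$ and a one-variable function; constancy of $S$ should then promote this to genuine proportionality $\Ric(g(\mu))=c\,\omega(\mu)$, matching the distinguished value $\mu_{0}$ of \cite{roos}. The converse implication is immediate, since any \K--Einstein metric has constant scalar curvature and so is extremal.

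The main obstacle I foresee is the ODE step: inverting $g(\mu)_{i\bar\jmath}$ and carrying out $\bar\partial\xi$ while keeping track of the derivatives of the generic norm $N_\Omega$ requires the Jordan-triple identities characteristic of bounded symmetric domains. It is precisely these identities that should make the contributions in the $\aut(\Omega)$-directions cancel, leaving only the fibre equation which pins down $f$ as constant; organising this cancellation neatly will be the technical heart of the argument.
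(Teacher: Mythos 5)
Your overall strategy coincides with the paper's: both reduce to showing that the $(1,0)$-gradient of the scalar curvature fails to be holomorphic unless $\mu=\mu_0=\gamma/(d+1)$, and both use that for this family constant scalar curvature occurs exactly at the K\"ahler--Einstein parameter (the paper's Lemma \ref{scalcurv} gives $\kappa_{g(\mu)}=\frac{d(\mu(d+1)-\gamma)}{\mu}(1-t)-(d+1)(d+2)$ with $t=|w|^2/\N^\mu$, which is your $S=f(t)$, obtained there by direct computation rather than by the symmetry argument you invoke). The converse direction is handled the same way in both.

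The gap is that the decisive step --- ``the constraint coming from the fibre directions should already force $f'(t)\equiv 0$'' --- is exactly the content of the theorem and is asserted rather than proved. You cannot lean on the symmetry to get it for free: extremal metrics with nonconstant scalar curvature that is a function of a single ``radial'' invariant do exist (Calabi's cohomogeneity-one examples), so the fact that $S=f(t)$ does not by itself make extremality collapse to $f'=0$; the specific computation is indispensable. What actually closes the argument in the paper is the single identity
\begin{equation}
\N^\mu g(\mu)^{w\bar w}-\bar w\sum_{j=1}^d g(\mu)^{j\bar w}(\N^\mu)_{\bar j}=\frac{1}{(\N^\mu-|w|^2)^{2}},\nonumber
\end{equation}
obtained by expanding $\det(g(\mu))$ along its last column, which yields
\begin{equation}
\sum_{\beta=1}^{d+1}g(\mu)^{\beta\bar w}\frac{\de\kappa_{g(\mu)}}{\de \bar z_\beta}=-\frac{d(\mu(d+1)-\gamma)}{\mu}\,\frac{w}{\N^{2\mu}(\N^\mu-|w|^2)^2};\nonumber
\end{equation}
this is $w$ times a nonconstant real-analytic invariant function, so its $\bar\de$ cannot vanish unless $\mu(d+1)=\gamma$. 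Note also that your anticipated technical obstacle is illusory: no Jordan-triple identities and no control of the $\aut(\Omega)$-direction components are needed, since the single component $\xi^w$ already obstructs holomorphicity, and the cofactor expansion of $\det(g(\mu))$ is all the linear algebra required. Until you carry out this (or an equivalent) computation, the proposal is a plausible plan rather than a proof.
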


The second problem we deal with concerns Engli\v{s} expansion of the Rawnsley $\varepsilon$-function  (see Section \ref{englis} for details) associated to $\left(M_{\Omega}(\mu), g(\mu)\right)$. 
In \cite{englis2000} M. Engli\v{s} proves that any strongly pseudoconvex bounded domain of $\C^n$ with real analytic boundary admits an asymptotic expansion of the $\varepsilon$-function, whose smooth coefficients $a_j$, $j=0,1,\dots$, are computed in \cite{englisasymp} up to $j= 3$. 
Such expansion is equivalent to the Tian--Yau--Zelditch expansion of Kempf's distortion function $T_m(x)\sim\sum_{j=0}^\infty b_j(x) m^{n-j}$, $m=0,1,\dots$, for polarized compact K\"ahler manifolds (see Zelditch \cite{zelditch} and also \cite{arezzoloi2003}, \cite{graloi} and \cite{quant}), which plays a fundamental role in the geometric quantization and quantization by deformation of a K\"ahler manifold, important both from the physical and geometrical point of view.
In the joint work with A. Loi  \cite{balancedCH}, the author of the present paper studies when the
$\varepsilon$-function of Cartan--Hartogs domains is constant. More precisely, it is proven that the $\varepsilon$-function is constant if and only if $(M_\Omega(\mu),g(\mu))$ is holomorphically isometric to the complex hyperbolic space. 
It is natural to study metrics with coefficients $a_j$'s of Engli\v{s} expansion prescribed (cfr. \cite{lutian}). In a recent paper \cite{englishartogs} A. Loi and F. Zuddas proved that the Engli\v{s} expansion's $a_2$ coefficient of a $2$-dimensional strongly pseudoconvex Hartogs domain is constant if and only if the domain is holomorphically isometric to the complex hyperbolic space.
The second main result of this paper is the following theorem, where we prove a similar statement for Cartan--Hartogs domains:
\begin{theor}\label{coeffa2}
Let $(M_\Omega(\mu), g(\mu))$ be a Cartan--Hartogs domain. If the coefficient $a_2$ of Engli\v{s} expansion of the $\varepsilon$-function associated to  $g(\mu)$ is constant, then $(M_\Omega(\mu), g(\mu))$ is K\"ahler--Einstein.
\end{theor}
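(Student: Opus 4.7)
The plan is to compute the coefficient $a_2$ explicitly for the metric $g(\mu)$ and show that its constancy forces $\mu$ to take the \K--Einstein value $\mu_0$ identified in \cite{roos}. Recall the formula of Engli\v{s}/Lu:
$$a_2=\frac{1}{3}\Delta\rho+\frac{1}{24}\bigl(|R|^2-4|\Ric|^2+3\rho^2\bigr),$$
where $\rho$ is the scalar curvature, $\Ric$ the Ricci tensor, $R$ the Riemann curvature tensor and $\Delta$ the Laplace--Beltrami operator of $g(\mu)$. The strategy reduces the problem to computing each of these curvature invariants on $M_\Omega(\mu)$ and then imposing that their prescribed combination be a constant.

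The decisive simplification comes from the symmetries of $(M_\Omega(\mu),g(\mu))$. The action of $\aut(\Omega)$ on $\Omega$ lifts to isometries of the whole Cartan--Hartogs domain, and on the fiber direction one further has the unitary action in $w$; combining the two, any point of $M_\Omega(\mu)$ can be moved to one of the form $(0,(\sqrt{t},0,\dots,0))$ with $t\in[0,1)$. It follows that every smooth scalar invariant of $g(\mu)$ is a function of the single real parameter $t$, so that the requirement $a_2\equiv\text{const}$ becomes an identity of one-variable functions.

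I would then compute $\rho(t)$, $|\Ric|^2(t)$, $|R|^2(t)$ and $\Delta\rho(t)$ at $(0,(\sqrt{t},0,\dots,0))$. The scalar curvature and Ricci form are already available from the author's joint work \cite{balancedCH}, being directly derivable from the \K\ potential $\Phi=-\log(N_\Omega^\mu-\|w\|^2)$. For the squared norms, the block-diagonal structure of $g(\mu)$ at $z=0$ splits the computation of the Riemann tensor into a base contribution (controlled by the Bergman-type metric of the Cartan factor, hence by the numerical invariants $r$, $a$, $b$, genus of $\Omega$), a fiber contribution (a perturbation of the Bergman metric of the ball), and mixed components governed by the derivatives of $N_\Omega$. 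Assembling everything yields $a_2(t)$ as an explicit rational function in $t$, $\mu$ and the Cartan data.

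Imposing $a_2(t)\equiv\text{const}$ and clearing denominators produces a polynomial identity in $t$; equating coefficients gives an algebraic system in $\mu$ and the invariants of $\Omega$ whose only solution should be $\mu=\mu_0$, yielding the \K--Einstein conclusion, parallel to the mechanism used for Theorem~\ref{extremalCH}. The main obstacle is the calculation of $|R|^2$: because $g(\mu)$ is not a product metric, the mixed base/fiber components of the Riemann tensor contribute nontrivially, and one must carry out the bookkeeping uniformly in the rank and multiplicities of $\Omega$; here the Jordan-triple-system formalism for bounded symmetric domains, which diagonalises many of the tensorial operations, is the natural tool for keeping the computation tractable.
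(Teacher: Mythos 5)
Your proposal follows essentially the same route as the paper: evaluate $\kappa_{g(\mu)}$, $|\Ric_{g(\mu)}|^2$, $|R|^2$ and $\Delta\kappa_{g(\mu)}$ at the points $(0,w)$, observe that $a_2$ restricted there is a quadratic polynomial in $|w|^2$, and force its coefficients $c_0,c_1$ to vanish, which pins down $\mu=\gamma/(d+1)$. The only (inessential) difference is that the paper sidesteps any explicit knowledge of $|R_{g_B}|^2$ by checking the single necessary condition $2c_0=-c_1$, in which that invariant cancels, rather than solving the full system in terms of the Jordan-triple data of $\Omega$ as you suggest.
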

Observe that we believe the converse is not in general true. More precisely, we conjecture that {\em the coefficient $a_2$ of Engli\v{s} expansion of the $\varepsilon$-function associated to  $g(\mu)$ is constant iff $(M_\Omega(\mu), g(\mu))$ is biholomorphically isometric to the complex hyperbolic space}.\\

The paper includes three other sections. In the first of them we compute the curvature tensor, the Ricci and the scalar curvature of a Cartan--Hartogs domain $\left(M_{\Omega}(\mu), g(\mu)\right)$ in terms of those of the bounded symmetric domain $\Omega$ it is based on. In Section 3 we recall the definition of extremal metrics and prove our first result, Theorem \ref{extremalCH}. Finally, in the last one we recall the definition of $\varepsilon$-function and prove our second result, Theorem \ref{coeffa2}.\\

The author would like to thank Prof. A. Loi for his several comments and suggestions.

\section{K\"ahler geometry of Cartan--Hartogs domains}
From now on we denote with greek letters the indices ranging from $1$ to $d+1$ and with roman letters those going from $1$ to $d$. Further, given a coordinate system $(z_1,\dots, z_d,w)$ centered at a point $p\in M_\Omega(\mu)$, we identify  $z_{d+1}$ with $w$. Finally, let us denote with indices $w$, $\bar w$ the $d+1$, $\overline{d+1}$ entries of $g(\mu)$.

Let $(\Omega,g_B)$ be an irreducible bounded symmetric domain of $\C^d$ of genus $\gamma$, endowed with its Bergman metric $g_B$, i.e. the metric whose associated K\"ahler form is $\omega_B=-\frac i2\de\bar\de\log\N^\gamma$. Here $\N=\N(z,z)$ denotes the  {\em generic norm} of $\Omega$ defined by:
\begin{equation}
\N(z, z)=(V(\Omega) \Kernel(z, z))^{-\frac{1}{\gamma}},\nonumber
\end{equation}
where $V(\Omega)$ is the total volume of $\Omega$ with respect to the Euclidean measure of $\C^d$ and $\Kernel(z, z)$ is its Bergman kernel (see e.g. \cite{arazy} for more details). Recall that $g_B$ is a K\"ahler--Einstein metric, thus up to multiply by a positive constant we have:
\begin{equation}\label{detgb}
\det(g^{\Omega(\mu)})=\N^{-\gamma}.
\end{equation}
Define the family of Cartan-Hartogs domains $(M_{\Omega}(\mu),g(\mu))$, depending on the positive real number $\mu$ and based on $\Omega$, to be the domains of $\C^{d+1}$ defined by:
\begin{equation}\label{defm}
M_{\Omega}(\mu)=\left\{(z,w)\in \Omega\times\C,\ |w|^2<\N^\mu(z,z)\right\},\nonumber
\end{equation}
and endowed with the K\"ahler metric $g(\mu)$ described by the (globally defined)
K\"ahler potential centered at the origin:
\begin{equation}\label{diastM}
\Phi(z,w)=-\log(\N^\mu-|w|^2),
\end{equation}
i.e. the K\"ahler form associated to $g(\mu)$ is given by $\omega(\mu)=-\frac i2\de\bar\de \log(\N^\mu-|w|^2)$.
By (\ref{diastM}) we have: 
$$g(\mu)_{\alpha\bar\beta}=-\frac{\de^2\log(\N^\mu-|w|^2)}{\de z_\alpha\de\bar z_\beta}, \quad \po\, \alpha,\beta=1,\dots, d+1,$$
which give rise to the following matrix:
{\small\begin{equation}\label{g}
g(\mu)=\frac{1}{(\N^\mu-|w|^2)^2}
\left(
\begin{array}{ccc|c}
  &   &   &   \\
  &  (\N^\mu)_j(\N^\mu)_{\bar k}-(\N^\mu)_{j\bar k}(\N^\mu-|w|^2) & &-(\N^\mu)_{j}\ w  \\
  &   &   & \\
  \hline
  &  -(\N^\mu)_{\bar k}\ \bar w  & &\N^\mu
\end{array}
\right),
\end{equation}}
where the upper left block is a $d\times d$ submatrix. 
Let us denote $g^{\Omega(\mu)}=\frac{\mu}{\gamma}g_B$,
i.e.
\begin{equation}
g^{\Omega(\mu)}_{j\bar k}=-\frac{\de^2\log \N^\mu}{\de z_j\de\bar z_k}=\frac{ (\N^\mu)_j(\N^\mu)_{\bar k}-(\N^\mu)_{j\bar k}\N^\mu}{\N^{2\mu}},\quad \po\ j,k=1,\dots d.\nonumber
\end{equation}
 A long but straightforward computation yields the following identity, true up to multiply by a positive constant:
\begin{equation}\label{det}
\det\left(g(\mu)\right)=\frac{\N^{\mu(d+1)-\gamma}}{(\N^\mu-|w|^2)^{d+2}}.
\end{equation}
Further, if $g^{j\bar k}_{\Omega(\mu)}$ and $g(\mu)^{j\bar k}$ denote respectively the $(j, k)$-entry of the inverse matrices of $g^{\Omega(\mu)}$ and $g(\mu)$, we have:
\begin{equation}\label{gjk}
g(\mu)^{j\bar k}=\frac{\N^\mu-|w|^2}{\N^\mu}g^{j\bar k}_{\Omega(\mu)},\quad \po\ j, k=1,\dots, d.
\end{equation}
The following lemmata give the local expression of the Ricci and the scalar curvature of $g(\mu)$.
Recall that the local components of the Ricci curvature $\Ric_{g}$ and the scalar curvature $\kappa_g$ of a K\"ahler metric $g$ on a $d+1$-dimensional complex manifold are given in local coordinates respectively by:
\begin{equation}\label{ricci}
\Ric_{\alpha\bar\beta}=-\frac{\de^2\log(\det g)}{\de z_{\alpha}\de\bar z_\beta}, \quad\po\ \alpha,\beta=1,\dots,d+1,
\end{equation}
\begin{equation}\label{scal}
\kappa_g=\sum_{\alpha,\beta=1}^{d+1}g^{\beta\bar\alpha}\Ric_{\alpha\bar \beta}.
\end{equation}
\begin{lem}
The Ricci tensor of a Cartan--Hartogs domains is given by
\begin{equation}\label{ricciCH}
\Ric_{g(\mu)}=\frac{\mu(d+1)-\gamma}{\mu}
\left(
\begin{array}{ccc|c}
  &   &   & 0  \\
  & g^{\Omega(\mu)}& &\vdots \\
  &   &   &0 \\
  \hline
 0 &\dots & 0&0
 \end{array}
\right)-(d+2)g(\mu).
\end{equation}
\end{lem}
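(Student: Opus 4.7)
The plan is to use formula \eqref{ricci} directly, exploiting the fact that the explicit expression \eqref{det} for $\det g(\mu)$ factors nicely. Writing
\[
\log\det(g(\mu)) = \bigl(\mu(d+1)-\gamma\bigr)\log\N \;-\;(d+2)\log(\N^\mu-|w|^2),
\]
the Ricci tensor becomes, by \eqref{ricci}, the $\partial\bar\partial$-derivative (with a minus sign) of this expression.

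First I would treat the second summand. Since $\Phi=-\log(\N^\mu-|w|^2)$ is precisely the K\"ahler potential \eqref{diastM} of $g(\mu)$, applying $-\partial^2/\partial z_\alpha\partial \bar z_\beta$ to $(d+2)\log(\N^\mu-|w|^2)$ produces exactly $-(d+2)\,g(\mu)_{\alpha\bar\beta}$. This accounts for the $-(d+2)g(\mu)$ term on the right-hand side of \eqref{ricciCH}.

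Next I would handle the first summand. The key observation is that $\N=\N(z,\bar z)$ depends only on the first $d$ variables, so $\partial \log\N/\partial w = \partial \log\N/\partial \bar w = 0$. Hence all entries involving the index $d+1$ vanish, producing the zero row and column in the matrix of \eqref{ricciCH}. For $\alpha=j$ and $\beta=k$ with $j,k\le d$, we use the definition of $g^{\Omega(\mu)}$ already given in the excerpt, namely
\[
g^{\Omega(\mu)}_{j\bar k} = -\frac{\partial^2 \log \N^\mu}{\partial z_j\partial \bar z_k} = -\mu\,\frac{\partial^2 \log \N}{\partial z_j\partial \bar z_k},
\]
so $-\bigl(\mu(d+1)-\gamma\bigr)\partial^2\log\N/\partial z_j\partial\bar z_k = \tfrac{\mu(d+1)-\gamma}{\mu}\,g^{\Omega(\mu)}_{j\bar k}$, which is exactly the upper-left block prescribed by \eqref{ricciCH}.

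The calculation really has no obstacle beyond bookkeeping: once \eqref{det} is in hand the factorization of $\log\det g(\mu)$ isolates the two contributions cleanly, and the fact that $\N$ is independent of $w$ is what forces the last row and column of the first matrix to vanish. Combining the two pieces yields \eqref{ricciCH}.
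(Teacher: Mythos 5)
Your proposal is correct and follows essentially the same route as the paper: both take the logarithm of the explicit determinant formula \eqref{det} and apply \eqref{ricci}, with the $-(d+2)\log(\N^\mu-|w|^2)$ term reproducing $-(d+2)g(\mu)$ via the potential \eqref{diastM} and the $\log\N$ term (independent of $w$) giving the bordered $g^{\Omega(\mu)}$ block. No gaps.
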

\begin{proof}
By direct computation, it  follows by (\ref{det}) and (\ref{ricci}) that in local coordinates the Ricci tensor of $g(\mu)$ reads:
\begin{equation}
\begin{split}
\Ric_{j \bar k}=&\frac{\mu(d+1)-\gamma}{\mu}g_{j\bar k}^{\Omega(\mu)}-(d+2)g(\mu)_{j\bar k},\\
\Ric_{j\bar w}=&-(d+2)g(\mu)_{j\bar w},\\
\Ric_{w\bar k}=&-(d+2)g(\mu)_{w\bar k},\\
\Ric_{w\bar w}=&-(d+2)g(\mu)_{w\bar w}.
\end{split}\nonumber
\end{equation}
\end{proof}
\begin{lem}\label{scalcurv}
The scalar curvature of a Cartan--Hartogs domain $(M_\Omega(\mu),g(\mu))$ is given by:
\begin{equation}\label{scalCH}
\kappa_{g(\mu)}=\frac{d\left(\mu(d+1)-\gamma\right)}{\mu}\frac{\N^\mu-|w|^2}{\N^{\mu}}-\left(d+2\right)\left(d+1\right).
\end{equation}
\end{lem}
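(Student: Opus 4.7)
The plan is to compute $\kappa_{g(\mu)}$ directly from the defining formula \eqref{scal}, feeding in the explicit expression \eqref{ricciCH} for the Ricci tensor obtained in the preceding lemma. Because the extra piece of the Ricci tensor has block-diagonal support concentrated in the $d\times d$ upper-left block, the computation splits naturally into two parts.

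First, I would split the sum as
\begin{equation*}
\kappa_{g(\mu)}=\frac{\mu(d+1)-\gamma}{\mu}\sum_{j,k=1}^{d}g(\mu)^{k\bar j}\,g^{\Omega(\mu)}_{j\bar k}-(d+2)\sum_{\alpha,\beta=1}^{d+1}g(\mu)^{\beta\bar\alpha}g(\mu)_{\alpha\bar\beta}.
\end{equation*}
The second sum is the trace of the identity matrix of size $d+1$, so it contributes exactly $(d+2)(d+1)$, accounting for the last summand of \eqref{scalCH}.

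For the first sum the key point is to be careful that $g(\mu)^{k\bar j}$ denotes the $(k,j)$ entry of the inverse of the \emph{full} $(d+1)\times(d+1)$ matrix $g(\mu)$, not of the truncated $d\times d$ block, so one cannot naively cancel it against $g^{\Omega(\mu)}_{j\bar k}$. This is the step where a small obstacle arises, and it is precisely handled by the identity \eqref{gjk}, which tells us that the upper-left block of the inverse of $g(\mu)$ is a scalar multiple of the inverse of $g^{\Omega(\mu)}$:
\begin{equation*}
g(\mu)^{j\bar k}=\frac{\N^\mu-|w|^2}{\N^\mu}\,g^{j\bar k}_{\Omega(\mu)}.
\end{equation*}
Substituting this and using $\sum_{j,k=1}^{d}g^{k\bar j}_{\Omega(\mu)}\,g^{\Omega(\mu)}_{j\bar k}=d$, the first sum collapses to $d\,(\N^\mu-|w|^2)/\N^\mu$.

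Putting the two contributions together yields exactly \eqref{scalCH}. The only nontrivial ingredient is the identity \eqref{gjk}, which is assumed from the preceding discussion; once that is in hand, the proof is a two-line trace computation.
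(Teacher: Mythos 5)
Your proposal is correct and follows the same route as the paper: substitute the Ricci formula \eqref{ricciCH} into \eqref{scal}, observe that the $-(d+2)g(\mu)$ part traces to $-(d+2)(d+1)$, and evaluate the remaining block sum via the identity \eqref{gjk}. Your explicit remark that $g(\mu)^{j\bar k}$ is an entry of the inverse of the full $(d+1)\times(d+1)$ matrix is exactly the point the paper handles (tersely) by invoking \eqref{gjk}.
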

\begin{proof}
By (\ref{scal}) and (\ref{ricciCH}) we get:
\begin{equation}
\kappa_{g(\mu)}=\frac{\left(\mu(d+1)-\gamma\right)}{\mu}\sum_{j,k=1}^dg(\mu)^{j\bar k}g_{k\bar j}^{\Omega(\mu)}-\left(d+2\right)\left(d+1\right).
\end{equation}
Conclusion follows by (\ref{gjk}).
\end{proof}
\begin{cor}\label{csc}
The scalar curvature of a Cartan--Hartogs domain $(M_\Omega(\mu),g(\mu))$ is constant if and only if $g(\mu)$ is K\"ahler--Einstein, i.e. $\mu=\mu_0=\frac{d+1}\gamma$.
\end{cor}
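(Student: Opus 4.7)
The plan is to read off the corollary directly from the scalar curvature formula in Lemma \ref{scalcurv}. That formula expresses $\kappa_{g(\mu)}$ as a fixed additive constant $-(d+2)(d+1)$ plus the product of the scalar $\frac{d(\mu(d+1)-\gamma)}{\mu}$ with the function $\frac{\N^\mu-|w|^2}{\N^\mu} = 1 - \frac{|w|^2}{\N^\mu}$. Hence constancy of $\kappa_{g(\mu)}$ is equivalent to either the coefficient $\mu(d+1)-\gamma$ vanishing, or the function $1-|w|^2/\N^\mu$ being constant on $M_\Omega(\mu)$.

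First I would rule out the second possibility. Fix any $z_0\in\Omega$; since $\N(z_0,z_0)>0$, the slice $\{(z_0,w):|w|^2<\N^\mu(z_0,z_0)\}$ is a nonempty open disk in $\C$, and along this slice the value of $1-|w|^2/\N^\mu(z_0,z_0)$ sweeps out a nondegenerate interval. Therefore $1-|w|^2/\N^\mu$ is nonconstant on $M_\Omega(\mu)$, and the only way the expression in Lemma \ref{scalcurv} can be constant is $\mu(d+1)-\gamma=0$, i.e.\ $\mu=\mu_0$.

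For the reverse implication I would feed $\mu(d+1)-\gamma=0$ into the Ricci formula \eqref{ricciCH} of the previous lemma: the entire first matrix drops out and we are left with $\Ric_{g(\mu_0)}=-(d+2)\,g(\mu_0)$, so $g(\mu_0)$ is K\"ahler--Einstein (with Einstein constant $-(d+2)$), and in particular its scalar curvature is constant. Combining the two directions gives the equivalence.

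The only real subtlety is the non-constancy step, and it is not much of an obstacle: it amounts to the observation that $|w|$ genuinely varies on $M_\Omega(\mu)$. Everything else is a straightforward reading of the already-established curvature formulae, so I expect the proof to be only a few lines.
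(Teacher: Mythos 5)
Your proof is correct and follows essentially the same route as the paper's: both read the equivalence off the scalar curvature formula of Lemma \ref{scalcurv}, the only differences being that you make explicit the (easy) non-constancy of $|w|^2/\N^\mu$ along a $w$-slice and that, where the paper cites Roos et al.\ for the K\"ahler--Einstein property of $g(\mu_0)$, you derive $\Ric_{g(\mu_0)}=-(d+2)g(\mu_0)$ directly from \eqref{ricciCH}, which is a valid and more self-contained alternative. Note also that your computation $\mu(d+1)-\gamma=0$ gives $\mu_0=\frac{\gamma}{d+1}$, which is the value used elsewhere in the paper; the expression $\mu_0=\frac{d+1}{\gamma}$ in the statement of the corollary is a typo.
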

\begin{proof}
By (\ref{scalCH}) $\kappa_{g(\mu)}$ is constant if and only if $\mu=\mu_0=\frac{d+1}{\gamma}$ and by \cite[Subsection 1.3, p.13]{roos} $g(\mu_0)$ is K\"ahler--Einstein.
\end{proof}
\begin{remar}\label{characthyp}\rm
Observe that among bounded symmetric domains the complex hyperbolic space $\CH^d$ is characterized by having $\gamma=d+1$. Furthermore, if we set $\Omega=\CH^d$, $g(\mu)$ is the hyperbolic metric $g_{hyp}:=\frac1{d+1}g_B$ iff $\mu=1$, i.e. $\left( M_{\CH^d}(1),g(1)\right)\simeq \left(\CH^{d+1},g_{hyp}\right)$. Finally, once set $\Omega=\CH^d$ and $\mu=1$ in (\ref{scalCH}), one recovers the scalar curvature of $(\CH^{d+1}, g_{hyp})$, namely $\kappa_{g_{hyp}}=-(d+1)\left(d+2\right)$.
\end{remar}
\begin{remar}\label{bsd}\rm
Being a bounded symmetric domain $(\Omega, g_B)$ a K\"ahler--Einstein manifold, in our notation we have $\Ric_{g^{\Omega(\mu)}}=\Ric_{g_B}=-g_B=-\frac{\gamma}{\mu}g^{\Omega(\mu)}$ and $\kappa_{g^{\Omega(\mu)}}=\frac\gamma\mu\kappa_{g_B}=-d\frac\gamma\mu$. Furthermore, by homogeneity of $\Omega$ follows that $|R_{g^{\Omega(\mu)}}|^2=\frac{\gamma^2}{\mu^2}|R_{g_B}|^2$ is a constant, which for $\Omega=\CH^d$ and $\mu=1$ is equal to $2d(d+1)$.
\end{remar}
We conclude this section with the following lemma needed in the proof of Theorem \ref{coeffa2}, which gives the expression of the norm of Cartan--Hartogs domains' curvature tensor evaluated at any point $(0,w)\in M_\Omega(\mu)\subset \Omega\times \C$.
\begin{lem}\label{Rat0}
The norm with respect to $g(\mu)$ of the curvature tensor $R$ of a Cartan--Hartogs domain $(M_\Omega(\mu), g(\mu))$ when evaluated at any point $(0,w)\in M_\Omega(\mu)\subset \Omega\times \C$ is given by:
\begin{equation}
\begin{split}
\left[|R|^2\right]_{z=0}=&(1-|w|^2)^2|R_{g^{\Omega(\mu)}}|^2-4|w|^2(1-|w|^2)\kappa_{g^{\Omega(\mu)}}+\\
&+2d(d+1)|w|^4+4(d+1),
\end{split}
\end{equation}
where $R_{g^{\Omega(\mu)}}$ and $\kappa_{g^{\Omega(\mu)}}$ are respectively the curvature tensor and the scalar curvature of $(\Omega,g^{\Omega(\mu)})$. 
\end{lem}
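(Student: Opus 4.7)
The plan is to compute $|R|^2$ at $(0,w)$ directly in K\"ahler normal coordinates for $g^{\Omega(\mu)}$ at $z=0\in\Omega$, i.e.\ coordinates with $g^{\Omega(\mu)}_{j\bar k}(0)=\delta_{j\bar k}$ and $\partial_l g^{\Omega(\mu)}_{j\bar k}(0)=0$; after a rescaling we may also assume $\N(0)=1$. From $g^{\Omega(\mu)}_{j\bar k}=-\partial_j\partial_{\bar k}\log\N^\mu$ one obtains $(\N^\mu)_j(0)=0$, $(\N^\mu)_{j\bar k}(0)=-\delta_{j\bar k}$, $(\N^\mu)_{jl\bar k}(0)=0$, and $(\N^\mu)_{jl\bar k\bar m}(0)$ in terms of $R^{\Omega(\mu)}_{j\bar k l\bar m}(0)$, the $\delta$'s, and the coordinate-dependent pure-holomorphic derivative $(\N^\mu)_{jl}(0)$. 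Specialising (\ref{g}) to $(0,w)$ then gives the block-diagonal form $g(\mu)_{j\bar k}(0,w)=\delta_{j\bar k}/s$, $g(\mu)_{w\bar w}(0,w)=1/s^2$, $g(\mu)_{j\bar w}(0,w)=0$, with $s:=1-|w|^2$, and the inverse is likewise block-diagonal.

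I next compute the first derivatives of $g(\mu)$ in $z,\bar z,w,\bar w$ and the relevant mixed second derivatives at $(0,w)$, and plug them into $R_{\alpha\bar\beta\gamma\bar\delta}=-\partial_\gamma\partial_{\bar\delta}g_{\alpha\bar\beta}+g^{\rho\bar\sigma}\partial_\gamma g_{\alpha\bar\sigma}\partial_{\bar\delta}g_{\rho\bar\beta}$ to determine every component of $R$ at $(0,w)$. Classifying by the pattern of $z$- versus $w$-indices and using the K\"ahler symmetries $R_{\alpha\bar\beta\gamma\bar\delta}=R_{\gamma\bar\beta\alpha\bar\delta}=R_{\alpha\bar\delta\gamma\bar\beta}$, only three classes survive: the pure-$z$ components $R_{j\bar k l\bar m}(0,w)=R^{\Omega(\mu)}_{j\bar k l\bar m}(0)/s-(\delta_{j\bar k}\delta_{l\bar m}+\delta_{j\bar m}\delta_{l\bar k})|w|^2/s^2$; the four symmetry-equivalent mixed two-$z$-two-$w$ classes (typified by $R_{w\bar w l\bar m}=R_{j\bar k w\bar w}=\cdots=-\delta/s^3$); and the all-$w$ piece $R_{w\bar w w\bar w}(0,w)=-2/s^4$. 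The most delicate step is the pure-$z$ identity above: the contribution of the coordinate-dependent quantity $(\N^\mu)_{jl}(0)$ to the direct Hessian $-\partial_l\partial_{\bar m}g(\mu)_{j\bar k}$ is cancelled by the correction term $g^{w\bar w}\partial_l g(\mu)_{j\bar w}\partial_{\bar m}g(\mu)_{w\bar k}$, which also involves $(\N^\mu)_{jl}(0)$ through $\partial_l g(\mu)_{j\bar w}=-(\N^\mu)_{jl}(0)\,w/s^2$; this is precisely the expression of the coordinate-independence of the Riemann tensor.

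Contracting each surviving class against the block-diagonal inverse metric then assembles $|R|^2(0,w)$. The pure-$z$ block gives $s^2|R_{g^{\Omega(\mu)}}|^2-4s|w|^2\kappa_{g^{\Omega(\mu)}}+2d(d+1)|w|^4$ once one recognises, by K\"ahler symmetry and the homogeneity of $\Omega$, that $\sum_{j,l}R^{\Omega(\mu)}_{j\bar j l\bar l}(0)=\sum_{j,l}R^{\Omega(\mu)}_{j\bar l l\bar j}(0)=\kappa_{g^{\Omega(\mu)}}$; the four mixed classes each contribute $d$, for a total of $4d$; and the all-$w$ component contributes $4$. These add to give precisely the stated formula. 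The main obstacle throughout is the careful bookkeeping of K\"ahler symmetries and the verification of the cancellation of the $(\N^\mu)_{jl}(0)$-dependent terms in the pure-$z$ curvature; once that is handled, the rest is a direct contraction.
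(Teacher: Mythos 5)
Your proof is correct and, at its core, performs the same computation as the paper: evaluation at $(0,w)$, classification of the curvature components by the pattern of $z$- versus $w$-indices (with the odd-$w$ classes vanishing), and term-by-term contraction yielding $(1-|w|^2)^2|R_{g^{\Omega(\mu)}}|^2-4|w|^2(1-|w|^2)\kappa_{g^{\Omega(\mu)}}+2d(d+1)|w|^4$ from the pure-$z$ block, $4d$ from the four positions of the $w\bar w$ pair, and $4$ from $R_{w\bar w w\bar w}$, exactly matching (\ref{first})--(\ref{third}). The only real difference is in how the unwanted terms are killed. You pass to K\"ahler normal coordinates for $g^{\Omega(\mu)}$ at $z=0$, which trivializes the contractions against $g^{\Omega(\mu)}$ (the paper instead handles these via the identity (\ref{somme})), but this forces you to track $(\N^\mu)_{jl}(0)$ and verify that its contribution to $-g(\mu)_{j\bar k l\bar m}$ cancels against $g(\mu)^{w\bar w}g(\mu)_{j\bar w l}g(\mu)_{w\bar k\bar m}$; that cancellation is indeed correct. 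The paper avoids the issue entirely by invoking the structure of the generic norm: $\N$ is a polynomial containing no monomials in $z$ or $\bar z$ alone, so every purely holomorphic (or purely antiholomorphic) derivative of $\N^\mu$ vanishes at $z=0$ --- and since this property survives a holomorphic change of coordinates fixing the origin, your $(\N^\mu)_{jl}(0)$ is in fact zero, so the ``most delicate step'' you flag, while valid, is not actually needed. One point you should make explicit in a full write-up: among the two-$z$-two-$w$ components, $R_{w\bar k w\bar l}$ (and its conjugate $R_{k\bar w l\bar w}$) is not symmetry-equivalent to $R_{w\bar w k\bar l}$ and must be checked to vanish separately; it does, by a cancellation of the same kind as the one you describe.
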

\begin{proof}
Recall that, by definition, the curvature tensor and its norm read:
\begin{equation}\label{holseccurv}
R_{\alpha\bar\beta\eta\bar\tau}=-g(\mu)_{\alpha\bar\beta\eta\bar\tau}+\sum_{\zeta,\theta=1}^{d+1}g(\mu)^{\zeta\bar\theta}g(\mu)_{\alpha\bar\zeta\eta}g(\mu)_{\theta\bar\tau\bar\beta},
\end{equation}
for all $\alpha,\beta,\eta,\tau=1,\dots, d+1$,
\begin{equation}\label{normhsc}
|R|^2=\sum_{\alpha,\beta,\eta,\theta,\zeta,\nu,\xi,\tau=1}^{d+1}\overline{g(\mu)^{\alpha\bar \zeta}}g(\mu)^{\beta\bar \nu}\overline{g(\mu)^{\eta\bar \xi}}g(\mu)^{\theta\bar \tau} R_{\alpha\bar \beta \eta\bar \theta}\overline{R_{\zeta\bar \nu \xi\bar \tau}},
\end{equation}
where we denote by $g(\mu)_{\alpha\bar\zeta\eta}$, $g(\mu)_{\alpha\bar\beta\eta\bar\tau}$ respectively, the derivatives $\de g(\mu)_{\alpha\bar\zeta}/\de z_\eta$, $\de^2 g(\mu)_{\alpha\bar\beta}/\de z_\eta\de\bar z_\tau$.

Observe that the generic norm $\N$ of a bounded symmetric domain evaluated at $(0,w)$ is a positive constant which can be chosen to be $1$.  Further, $\N$ is a polynomial which does not contain monomials in the variable $z$ or $\bar z$ alone (see e.g. \cite[Prop. 7, p. 4]{articwall}), thus in particular, the derivatives of $\N^\mu$ which are taken only in the holomorphic or only in the antiholomorphic variables, vanish when evaluated at $(0,w)$. It is then easy to see that the {\em mixed components} of the inverse of $g(\mu)$ vanishes at $(0,w)$ (cfr. (\ref{g})), namely $\left[g(\mu)^{j\bar w}\right]_{z=0}=\left[g(\mu)^{w\bar k}\right]_{z=0}=0$.
Thus, for our computation of  (\ref{holseccurv}) and (\ref{normhsc}), we do not need to consider  the terms  of the sums which present such mixed components. In particular, this allows us to write (\ref{holseccurv}) as:
\begin{equation}\label{defR}
R_{\alpha\bar\beta\eta\bar\tau}=-g(\mu)_{\alpha\bar\beta\eta\bar\tau}+\sum_{p,q=1}^{d}g^{p\bar q}g(\mu)_{\alpha\bar p\eta}g(\mu)_{q\bar\tau\bar\beta}+g(\mu)^{w\bar w}g(\mu)_{\alpha\bar w  \eta}g(\mu)_{w\bar \beta\bar \tau}.
\end{equation}
In order to give an explicit expression of $\left[|R|^2\right]_{z=0}$, we need to see how the derivatives of the metric and its inverse look like when evaluated at  $(0,w)$. First, it follows easily by (\ref{detgb}), (\ref{g}) and (\ref{gjk}) (and by the discussion above) that: 
\begin{equation}\label{inverse}
\left[g(\mu)^{w\bar w}\right]_{z=0}=(1-|w|^2)^{2},\ \left[ g(\mu)^{j\bar k}\right]_{z=0}=(1-|w|^2)\left[g^{j\bar k}_{\Omega(\mu)}\right]_{z=0}.
\end{equation}
Further the nonvanishing third order derivatives of $g(\mu)$ at $(0,w)$ are
\begin{equation}\label{gjwk}
\begin{split}
\left[g(\mu)_{j\bar w \bar k}\right]_{z=0}=&\!\left[\overline{g(\mu)_{\bar j w  k}}\right]_{z=0}\!=\!\left[-w\frac{(\N^\mu)_{j\bar k}(\N^\mu-|w|^2)-2(\N^\mu)_{\bar k}(\N^\mu)_j}{(\N^\mu-|w|^2)^3}\right]_{z=0}\\
=&-w\frac{\left[(\N^\mu)_{j\bar k}\right]_{z=0}}{(1-|w|^2)^2},
\end{split}
\end{equation}
\begin{equation}\label{gwww}
 \left[g(\mu)_{w\bar w w}\right]_{z=0}= \left[\overline{g(\mu)_{\bar w w \bar w}}\right]_{z=0}=\frac{2\bar w}{(1-|w|^2)^3},
\end{equation}
\begin{equation}\label{gjkl}
\left[g(\mu)_{j\bar k l}\right]_{z=0}=\left[\overline{g(\mu)_{\bar j k\bar l}}\right]_{z=0}=-\frac{\left[(\N^\mu)_{j\bar k l}\right]_{z=0}}{1-|w|^2}.
\end{equation}
Observe in particular that we get zero in the following cases:
\begin{equation}\label{vanterm}
\left[g(\mu)_{w\bar w k}\right]_{z=0}=\left[g(\mu)_{w\bar w \bar k}\right]_{z=0}=\left[g(\mu)_{ w \bar k w}\right]_{z=0}=\left[g(\mu)_{k\bar w\bar w}\right]_{z=0}=0.
\end{equation}
The fourth order's read:
\begin{equation}\label{gijkl}
\left[g(\mu)_{i\bar jk\bar l}\right]_{z=0}=\frac{\left[g^{\Omega(\mu)}_{i\bar jk\bar l}\right]_{z=0}}{1-|w|^2}+|w|^2\frac{\left[(\N^\mu)_{i\bar l}(\N^\mu)_{k\bar j}+(\N^\mu)_{i\bar j}(\N^\mu_{k\bar l}\right]_{z=0}}{(1-|w|^2)^2},
\end{equation}
\begin{equation}\label{gwjkl}
\left[g(\mu)_{w\bar jk\bar l}\right]_{z=0}=\left[\overline{g(\mu)_{j\bar w l\bar k}}\right]_{z=0}=-\frac{\bar w\left[(\N^\mu)_{k\bar l\bar j}\right]_{z=0}}{(1-|w|^2)^2}, 
\end{equation}
\begin{equation}\label{gwwkl}
\left[g(\mu)_{w\bar w k\bar l}\right]_{z=0}=-\frac{(1+|w|^2)\left[(\N^\mu)_{k\bar l}\right]_{z=0}}{(1-|w|^2)^3},
\end{equation}
\begin{equation}\label{gwwww}
\left[g(\mu)_{j\bar w w\bar w}\right]_{z=0}=\left[g(\mu)_{w\bar j w\bar w}\right]_{z=0}=0,\quad\left[g(\mu)_{w\bar w w\bar w}\right]_{z=0}=\frac{2+4|w|^2}{(1-|w|^2)^4}.
\end{equation}
Depending on how many times the term $w$ appears, we have five kinds of tensors (\ref{defR}). Those of type $R_{i\bar j k\bar l}$ can be written as:
\begin{equation}\label{rijkl}
\left[R_{i\bar j k\bar l}\right]_{z=0}=\frac{\left[\left(R_{g^{\Omega(\mu)}}\right)_{i\bar j k\bar l}\right]_{z=0}}{1-|w|^2}-\frac{|w|^2\left[(\N^\mu)_{i\bar l}(\N^\mu)_{ k\bar j}+(\N^\mu)_{i\bar j}(\N^\mu)_{k\bar l}\right]_{z=0}}{(1-|w|^2)^2},
\end{equation}
where we used  (\ref{gjkl}), (\ref{gjwk}), (\ref{gijkl}), applied definition (\ref{holseccurv})  to $g_{\Omega(\mu)}$ and considered the identity $$\left[g^{\Omega(\mu)}_{i \bar j \bar k}\right]_{z=0}=-\left[(\N^\mu)_{i\bar j\bar k}\right]_{z=0}.$$
By applying (\ref{gjkl}), (\ref{vanterm}) and (\ref{gwjkl}), the tensors of type $R_{w\bar j k\bar l}$ (and similarly those of type  $R_{j\bar w l\bar k}$), can be written as:
\begin{equation}
\left[R_{w\bar j k\bar l}\right]_{z=0}=\frac{\bar w\left[(\N^\mu)_{k\bar l\bar j}\right]_{z=0}}{(1-|w|^2)^2}+\frac{\bar w}{(1-|w|^2)^2}\left[\sum_{p,q=1}^{d}g^{p\bar q}_{\Omega(\mu)}(\N^\mu)_{k\bar p}(\N^\mu)_{q\bar l\bar j}\right]_{z=0}.\nonumber
\end{equation}
Furthermore, since $\left[g_{j\bar k}^{\Omega(\mu)}\right]_{z=0}=-\left[(\N^\mu)_{j\bar k}\right]_{z=0}$, it follows easily the identity:
\begin{equation}\label{somme}
\left[\sum_{p,q=1}^{d}g^{p\bar q}_{\Omega(\mu)}(\N^\mu)_{k\bar p}\right]_{z=0}=\left[-\sum_{p=1}^{d}g^{p\bar q}_{\Omega(\mu)}g^{\Omega(\mu)}_{k\bar p}\right]_{z=0}=-\sum_{q=1}^{d}\delta_{k\bar q},
\end{equation}
which implies:
$$\left[\sum_{p,q=1}^{d}g^{p\bar q}_{\Omega(\mu)}(\N^\mu)_{k\bar p}(\N^\mu)_{q\bar l\bar j}\right]_{z=0}=-\left[(\N^\mu)_{k\bar l\bar j}\right]_{z=0}.$$
Thus we have: 
\begin{equation}\label{rwjkl}
\left[R_{w\bar j k\bar l}\right]_{z=0}=\left[R_{j\bar w l\bar k}\right]_{z=0}=0.
\end{equation}
The tensors of type $R_{w\bar w k\bar l}$ are given by:
{\small\begin{equation}\label{rwwkl}
\begin{split}
\left[R_{w\bar w k\bar l}\right]_{z=0}=&\frac{(1+|w|^2)\left[(\N^\mu)_{k\bar l}\right]_{z=0}}{(1-|w|^2)^3}+\frac{|w|^2}{(1-|w|^2)^3}\left[\sum_{p,q=1}^{d}g^{p\bar q}_{\Omega(\mu)}(\N^\mu)_{k\bar p}(\N^\mu)_{q\bar l}\right]_{z=0}\\
=&\frac{\left[(\N^\mu)_{k\bar l}\right]_{z=0}}{(1-|w|^2)^3},
\end{split}
\end{equation}}
where the first equality follows by  (\ref{gjwk}),  (\ref{vanterm}) and (\ref{gwwkl}), while the second one by the identity:
$$\left[\sum_{p,q=1}^{d}g^{p\bar q}_{\Omega(\mu)}(\N^\mu)_{k\bar p}(\N^\mu)_{q\bar l}\right]_{z=0}=-\left[(\N^\mu)_{k\bar l}\right]_{z=0},$$
which is an immediate consequence of (\ref{somme}). Finally, the tensors where $w$ appear three times vanish by (\ref{vanterm}) and (\ref{gwwww}), i.e. 
\begin{equation}\label{rwwwj}
\left[R_{w\bar w w\bar l}\right]_{z=0}=\left[R_{w\bar w l\bar w}\right]_{z=0}=\,0,
\end{equation}
while the ones with only $w$ terms by (\ref{inverse}), (\ref{gwww}), (\ref{vanterm}) and (\ref{gwwww}) read:
\begin{equation}\label{rwwww}
\left[R_{w\bar w w\bar w}\right]_{z=0}=-\frac{2}{(1-|w|^2)^4}.
\end{equation}
In order to compute the norm (\ref{normhsc}) recall that, besides all terms which contain mixed components of the inverse of $g(\mu)$, we can delete from the sum the terms containing $R_{w\bar w w\bar l}$ or one of its permutation, which vanishes by (\ref{rwwwj}), as well as the terms which present derivatives only in $z$ or only in $\bar z$. Thus we get:
\begin{equation}\label{Rz0}
\begin{split}
\left[|R|^2\right]_{z=0}=&\sum_{i,j,k,l,p,q,r,s=1}^{d}\left[\overline{g(\mu)^{i\bar p}}g(\mu)^{j\bar q}\overline{g(\mu)^{k\bar r}}g(\mu)^{l\bar s} R_{i\bar j k\bar l}\overline{R_{p\bar q r\bar s}}\right]_{z=0}+\\
&+4\sum_{k,l,r,s=1}^{d}\left[\overline{g(\mu)^{w\bar w}}g(\mu)^{w\bar w}\overline{g(\mu)^{k\bar r}}g(\mu)^{l\bar s} R_{w\bar w k\bar l}\overline{R_{w\bar w r\bar s}}\right]_{z=0}+\\
&+\left[\overline{g(\mu)^{w\bar w}}g(\mu)^{w\bar w}\overline{g(\mu)^{w\bar w}}g(\mu)^{w\bar w} R_{w\bar w w\bar w}\overline{R_{w\bar w w\bar w}}\right]_{z=0},
\end{split}
\end{equation}
By (\ref{inverse}), (\ref{rijkl}) and  (\ref{somme}) one has:
\begin{equation}\label{first}
\begin{split}
\sum_{i,j,k,l,p,q,r,s=1}^{d}\left[\overline{g(\mu)^{i\bar p}}g(\mu)^{j\bar q}\right.&\left.\overline{g(\mu)^{k\bar r}}g(\mu)^{l\bar s} R_{i\bar j k\bar l}\overline{R_{p\bar q r\bar s}}\right]_{z=0}=(1-|w|^2)^2|R_{g^{\Omega(\mu)}}|^2+\\
&-4|w|^2(1-|w|^2)\kappa_{g^{\Omega(\mu)}}+2d(d+1)|w|^4.
\end{split}
\end{equation}
Furthermore, by (\ref{inverse}), (\ref{somme}) and (\ref{rwwkl}), we get:
\begin{equation}\label{second}
\begin{split}
\sum_{k,l,r,s=1}^{d}\left[\overline{g(\mu)^{w\bar w}}\right.&\left.g(\mu)^{w\bar w}\overline{g(\mu)^{k\bar r}}g(\mu)^{l\bar s} R_{w\bar w k\bar l}\overline{R_{w\bar w r\bar s}}\right]_{z=0}=\\
=&\sum_{k,l,r,s=1}^{d}\left[\overline{g_{\Omega(\mu)}^{k\bar r}}g_{\Omega(\mu)}^{l\bar s}(\N^\mu)_{k\bar l}\overline{(\N^\mu)_{r\bar s}}\right]_{z=0}=d,
\end{split}
\end{equation}
and by (\ref{inverse}) and (\ref{rwwww}), we have:
\begin{equation}\label{third}
\left[\overline{g(\mu)^{w\bar w}}g(\mu)^{w\bar w}\overline{g(\mu)^{w\bar w}}g(\mu)^{w\bar w} R_{w\bar w w\bar w}\overline{R_{w\bar w w\bar w}}\right]_{z=0}=4.
\end{equation}
Substituting (\ref{first}), (\ref{second}) and (\ref{third}) in (\ref{Rz0}) we finally get:
\begin{equation}
\begin{split}
\left[|R|^2\right]_{z=0}=&(1-|w|^2)^2|R_{g^{\Omega(\mu)}}|^2-4|w|^2(1-|w|^2)\kappa_{g^{\Omega(\mu)}}+\\
&+2d(d+1)|w|^4+4(d+1),
\end{split}\nonumber
\end{equation}
as wished.
\end{proof}

\section{Extremal metrics on cartan--Hartogs domains}
The notion of extremal metrics on compact K\"ahler manifolds has been introduced by E. Calabi \cite{Calabi82} as solutions of a variational problem involving the integral of the scalar curvature. In this sense they are a natural generalization of K\"ahler--Einstein and constant scalar curvature metrics. In the noncompact case they can be defined as those metrics whose $(1,0)$-part of the Hamiltonian vector field associated to the scalar curvature is holomorphic, notion which is equivalent for compact manifold to be extremal in the variational sense. Fixing local coordinates $(z_1,\dots, z_{d+1})$ on a neighbourhood of a point $p$ belonging to a $d+1$-dimensional complex manifold $M$ endowed with a K\"ahler metric $g$, the extremal condition is given locally by the following system of PDE's (see \cite{Calabi82}):
\begin{equation}\label{scal1}
\frac{\de}{\de\bar z_\eta}\left( \sum_{\beta=1}^{d+1}g^{\beta\bar \alpha}\frac{\de\kappa_{g}}{\de \bar z_\beta}\right)=0,
\end{equation}
for all $ \alpha,\eta=1,\dots,d+1$.

We prove now our first result, Theorem \ref{extremalCH}.
\begin{proof}[Proof of Theorem \ref{extremalCH}]
By Lemma \ref{scalcurv} and Corollary \ref{csc} it is enough to show that $g(\mu)$ is not extremal for all $\mu\neq  \frac{d+1}{\gamma}$. Assume that $\mu\neq \frac{d+1}{\gamma}$. Then by (\ref{scalCH}) the scalar curvature of $g(\mu)$ is not constant and we have:
\begin{equation}
\begin{split}\label{scalderiv}
\frac{\de\kappa_{g(\mu)}}{\de \bar z_j}=&\frac{d\,\left(\mu(d+1)-\gamma\right)}{\mu}\frac{|w|^2(\N^\mu)_{\bar j}}{\N^{2\mu}},\\
\frac{\de\kappa_{g(\mu)}}{\de \bar w}=&-\frac{d\,\left(\mu(d+1)-\gamma\right)}{\mu}\frac{w}{\N^{\mu}}.
\end{split}
\end{equation}
Observe that by (\ref{g}) we can write:
$$g(\mu)^{w\bar w}=\frac{\det\left(\left[(\N^\mu)_p(\N^\mu)_{\bar q}-(\N^\mu)_{p\bar q}(\N^\mu-|w|^2)\right]\right)}{\det(g(\mu))(\N^\mu-|w|^2)^{2d}},$$
$$g(\mu)^{j\bar w}=\frac{w\sum_{k=1}^d(-1)^{j+k}(\N^\mu)_{k}\det\left(\left[(\N^\mu)_p(\N^\mu)_{\bar q}-(\N^\mu)_{p\bar q}(\N^\mu-|w|^2)\right]_{k\bar j}\right)}{\det(g(\mu))(\N^\mu-|w|^2)^{2d}},$$
where for any matrix $A$ we denote by $A_{k\bar j}$ the matrix $A$ deprived of the $k$th row and $j$th column.
Thus, developing $\det(g(\mu))$ along the last column we get:
\begin{equation}
\begin{split}
&\det(g(\mu))=\frac{\N^\mu\det\left(\left[(\N^\mu)_p(\N^\mu)_{\bar q}-(\N^\mu)_{p\bar q}(\N^\mu-|w|^2)\right]\right)}{(\N^{\mu}-|w|^2)^{2(d+1)}}+\\
&-\frac{|w|^2\sum_{j,k=1}^d(-1)^{j+k}(\N^\mu)_{k}(\N^\mu)_{\bar j}\det\left(\left[(\N^\mu)_p(\N^\mu)_{\bar q}-(\N^\mu)_{p\bar q}(\N^\mu-|w|^2)\right]_{k\bar j}\right)}{(\N^{\mu}-|w|^2)^{2(d+1)}}\\
&\qquad\qquad=\det(g(\mu))(\N^\mu-|w|^2)^{2}\left(\N^\mu g(\mu)^{w\bar w}-\bar w\sum_{j=1}^dg(\mu)^{j\bar w}(\N^\mu)_{\bar j}\right),
\end{split}\nonumber
\end{equation}
that is, 
$$\N^\mu g(\mu)^{w\bar w}-\bar w\sum_{j=1}^dg(\mu)^{j\bar w}(\N^\mu)_{\bar j}=\frac{1}{(\N^\mu-|w|^2)^{2}}.$$
Using (\ref{scalderiv}) together with this last identity, we get:
\begin{equation}
\begin{split}
\sum_{\beta=1}^{d+1}g(\mu)^{\beta\bar w}\frac{\de\kappa_{g(\mu)}}{\de \bar z_\beta}= &\frac{d\,\left(\mu(d+1)-\gamma\right)}{\mu}\frac{w}{\N^{2\mu}}\left(\bar w\sum_{j=1}^{d}g(\mu)^{j\bar w}(\N^{\mu})_{\bar j}- \N^\mu g^{w\bar w}\right)\\
=&-\frac{d\,\left(\mu(d+1)-\gamma\right)}{\mu}\frac{w}{\N^{2\mu}\, (\N^\mu-|w|^2)^{2}},
 \end{split}\nonumber
\end{equation}
which is not constant unless $\mu=\frac{\gamma}{d+1}$. Thus by system (\ref{scal1}), $g(\mu)$ is not extremal for all $\mu\neq \frac{\gamma}{d+1}$.
\end{proof}

\section{Engli\v{s} expansion for Cartan--Hartogs domains}\label{englis}
Let $M$ be a $n$-dimensional complex manifold endowed with a K\"ahler metric $g$ and let $\varphi$ be a globally defined K\"ahler potential for $g$, i.e. $\omega=\frac{i}{2}\de\bar\de\varphi$ where $\omega$ is the K\"ahler form associated to $g$. Consider the weighted Bergman space $\mathcal{H}_\alpha$ of square integrable holomorphic functions on $(M,g)$ with respect to the measure $e^{-\alpha\varphi}\frac{\omega^n}{n!}$, i.e. $f$ belongs to $\mathcal{H}_\alpha$ iff $\int_Me^{-\alpha\varphi}|f|^2\frac{\omega^n}{n!}<\infty$. Define the $\varepsilon$-function associated to $g$ to be the function:
$$\varepsilon_{\alpha g}(x)=e^{-\alpha\varphi(x)}K_{\alpha}(x,x), \quad x\in M,$$
where $K_\alpha(x,y)$ is the reproducing kernel of $\mathcal{H}_\alpha$, i.e. $K_\alpha(x,y)=\sum_j f_j(x)\bar f_j(y)$, for an orthonormal basis $\left\{f_j\right\}$ of $\mathcal{H}_\alpha$. As suggested by the notation it is not difficult to verify that $\varepsilon_{\alpha g}$ depends only on the metric $g$ and not on the choice of the K\"ahler potential $\varphi$ (which is defined up to an addition with the real part of a holomorphic function on $M$) or on the orthonormal basis chosen. In the literature the function $\varepsilon_{\alpha g}$ was first introduced under the name of $\eta$-{\em function} by J. Rawnsley in \cite{rawnsley}, later renamed as $\theta$-{\em function} in \cite{CGR}. 
In \cite{englis2000} M. Engli\v{s} proves that if $M$ is a strongly pseudoconvex bounded domain of $\C^n$ with real analytic boundary, then admits the following asymptotic expansion of $\varepsilon_{\alpha g}$, with respect to $\alpha$:
\begin{equation}\label{englisexp}
\varepsilon_{\alpha g}(x)\sim\sum_{j=0}^\infty a_j(x)\alpha^{n-j},\quad x\in M,
\end{equation}
where $a_j$, $j=0,1,\dots$ are smooth coefficients. In \cite{englisasymp} M. Engli\v{s} also computes these coefficients for $j\leq 3$ (we omit the term $a_3$ for its expression is complicated and not needed in our approach):
\begin{equation}\label{coefficients}
\left\{
\begin{array}{l}
a_0   =   1,  \\
a_1  =   \frac{1}{2}\kappa_g,  \\
a_2=\frac{1}{3}\Delta\kappa_g+\frac{1}{24}\left(|R|^2-4|\Ric|^2+3\kappa_g^2\right).
\end{array}
\right.
\end{equation}

\vspace{0.5cm}
\noindent We are now in the position of proving our second and last result, Theorem \ref{coeffa2}.
\begin{proof}[Proof of Theorem \ref{coeffa2}]
Since $g(\mu)$ is K\"ahler--Einstein iff $\mu=\mu_0=\frac{\gamma}{d+1}$ (see \cite[Subsection 1.3, p.13]{roos}), it is enough to show that if the coefficient $a_2$ evaluated at the point $(0,w)\in M_\Omega(\mu)\subset\Omega\times \C$ is constant then $\mu=\frac{\gamma}{d+1}$. Observe first that 
 by (\ref{scalCH}) and since we can assume $\left[\N^\mu\right]_{z=0}=1$ (see the discussion at the beginning of the proof of Lemma \ref{Rat0}) one has:
\begin{equation}\label{scalat0}
\left[\kappa_{g(\mu)}\right]_{z=0}=\frac{d\left(\mu(d+1)-\gamma\right)}{\mu}(1-|w|^2)-\left(d+2\right)\left(d+1\right).
\end{equation}
Furthermore, since by definition the norm of the Ricci curvature reads:
\begin{equation}
|\Ric_{g(\mu)}|^2=\sum_{\alpha,\beta,\eta,\tau=1}^{d+1}\overline{g^{\eta\bar \tau}}g^{\alpha\bar\beta}\Ric_{\eta\bar\alpha}\overline{\Ric_{\tau\bar\beta}},\nonumber
\end{equation}
by (\ref{ricciCH}) evaluated at $(0,w)$ we get:
\begin{equation}\label{riccinorm}
\begin{split}
\left[|\Ric_{g(\mu)}|^2\right]_{z=0}=&d\left(\frac{\mu(d+1)-\gamma}{\mu}\right)^2(1-|w|^2)^2+\\
&-2d(d+2)\frac{\mu(d+1)-\gamma}{\mu}(1-|w|^2)+(d+1)(d+2)^2.
\end{split}
\end{equation}
Further, by (\ref{scalCH}), (\ref{inverse}) and (\ref{somme}) we have: 
\begin{equation}\label{delta}
\begin{split}
\left[\Delta\kappa_{g(\mu)}\right]_{z=0}=&\sum_{\alpha,\beta=1}^{d+1}g(\mu)^{\alpha\bar \beta}\frac{\de^2\kappa_{g(\mu)}}{\de z_\alpha\de\bar z_{\beta}}\\
=&-\frac{d\left(\mu(d+1)-\gamma\right)}{\mu}(1-|w|^2)\left((d-1)|w|^2+1\right).
\end{split}
\end{equation}
Thus, $\left[a_2\right]_{z=0}$ is a polynomial of second order in the variable $|w|^2$ which is constant if and only if the coefficients, let us say $c_0$ and $c_1$, of $|w|^4$ and $|w|^2$ vanish. In particular, we check the necessary condition $2c_0=-c_1$:
{\small\begin{equation}
\begin{split}
&2d(d-1)\left(d+1-\frac\gamma\mu\right)+\frac14|R_{g_B}|^2\frac{\gamma^2}{\mu^2}-d\frac\gamma\mu+\frac{d(d+1)}{2}
+\left(\frac34d^2-d\right)\left(d+1-\frac\gamma\mu\right)^2\\
&=2d^2\left(d+1-\frac\gamma\mu\right)+\frac14|R_{g_B}|^2\frac{\gamma^2}{\mu^2}-\frac d2 \frac\gamma\mu+\left(\frac34d^2-d\right)\left(d+1-\frac\gamma\mu\right)^2+\\
&\qquad\qquad-\frac34d(d+1)(d+2)\left(d+1-\frac\gamma\mu\right),
\end{split}\nonumber
\end{equation}}which is obtained recalling that $\kappa_{g^{\Omega(\mu)}}=-d\frac\gamma\mu$, $|R_{g^{\Omega(\mu)}}|^2=|R_{g_B}|^2\frac{\gamma^2}{\mu^2}$ (cfr. Remark \ref{bsd}), and using Lemma \ref{Rat0}, equations (\ref{scalat0}),  (\ref{riccinorm}) and (\ref{delta}).
It follows that:
$$\frac34d^2\left(d+1-\frac\gamma\mu\right)\left(d+3\right)=0,$$
which is satisfied only by $\frac\gamma\mu=d+1$,  i.e. $\mu=\mu_0=\frac{\gamma}{d+1}$, as wished.
\end{proof}

\end{document}